\documentclass[a4paper,12pt]{article}
 \usepackage[english]{babel}
  \usepackage{amsmath,amsfonts,amssymb,amsthm,bbm}
   \usepackage{graphics,epsfig,psfrag} 
   \usepackage{paralist,subfigure}

   \usepackage{hyperref} 

    \usepackage[active]{srcltx} 
    \usepackage{color,soul} 
    \usepackage[latin1]{inputenc}
    \usepackage[OT1]{fontenc}

\newtheorem{theorem}{Theorem}[section]
\newtheorem{corollary}[theorem]{Corollary}
\newtheorem{lemma}[theorem]{Lemma}
\newtheorem{proposition}[theorem]{Proposition}
\newtheorem{counter}{Counter-example}

\theoremstyle{definition}
\newtheorem{definition}[theorem]{Definition}
\newtheorem{remark}{Remark}

\def\N{\mathbb{N}}

\def\R{\mathbb{R}}

\let\vp=\varphi

\let\t=\tilde

\let\.=\cdot
\let\0=\emptyset

\def\1{\mathbbm{1}}

\def\L{\mathcal{L}}

\def\pe{principal eigenvalue}

\def\l{\lambda_1}
\def\mean#1{\left\langle#1\right\rangle}

\def\seq#1{(#1_n)_{n\in\N}}
\def\limn{\lim_{n\to\infty}}

\newenvironment{formula}[1]{\begin{equation}\label{#1}}
                       {\end{equation}\noindent}

\def\Fi#1{\begin{formula}{#1}}
\def\Ff{\end{formula}\noindent}

\title{\bf On the criticality and the \pe\ of almost periodic elliptic operators}
\author{Luca Rossi \\
\footnotesize{Istituto G.~Castelnuovo, Sapienza Universit\`a di Roma, Rome, Italy}
}

\date{}

\begin{document}

\maketitle


{\em In memory of Ha\"\i m Brezis,
whose work is an endless 
source of inspiration.}

\bigskip

\begin{abstract}
We review the notion and the properties of the generalised \pe\ for elliptic operators in unbounded domains,
and we relate it  with the criticality theory.
We focus on operators with almost periodic coefficients.
We present a Liouville-type result in dimension $N\leq2$. 
Next, we show with a counter-example that criticality
is not equivalent to the existence of an almost periodic
principal eigenvalue, even for self-adjoint operators.
Finally, we exhibit an almost periodic operator  which is subcritical but which admits 
 a critical limit operator.
 This is a manifestation of the instability character of the criticality
 property in the almost periodic setting.
\end{abstract}


\section{Introduction}

The {\em principal eigenvalue} encodes several fundamental properties of an elliptic operator, 
such as the validity of the maximum principle and the existence, uniqueness and stability of solutions.
The principal eigenvalue is classically defined as the eigenvalue which admits a positive eigenfunction.
For a uniformly elliptic operator with smooth coefficients, defined in a bounded smooth domain
under different types of boundary conditions (Dirichlet, Neumann, Robin), the \pe\ exists, 
it is unique and simple.
These facts can be shown using the Krein-Rutman theory, the extension of the Perron-Frobenius 
theory for operators on infinite dimensional spaces.
The crucial conditions required by the Krein-Rutman theory are the positivity and the compactness
of the resolvent of the operator, which are both consequences of the ellipticity as well as the
compactness of the spatial domain.
Indeed, the same theory can be applied in the case of periodic operators, that is, operators defined
on the torus, and provides one with the {\em periodic \pe}. 
In order to understand what happens beyond the periodic case, a first framework to investigate
is that of almost periodic (a.p.~in the sequel) functions, see the precise definition in Section~\ref{sec:results}.
A.p.~functions share the following property with continuous periodic functions: 
the set of their translations is relatively compact in $L^\infty$.
Despite this fact, one cannot cast the problem for operators with a.p.~coefficients
in a compact framework, unlike the periodic case.

In order to study the existence of an {\em a.p.~\pe} (i.e.~an eigenvalue with an a.p.~positive eigenfunction)
one should consider the eigenproblem in the whole
space and prove a-posteriori that the eigenfunction is a.p.
However, in such a framework the resolvent of the operator is not compact, making the Krein-Rutman theory unavailable.
As a matter of fact, both uniqueness and simplicity of the \pe~fail in general for operators in unbounded domains.
These questions have been addressed in \cite{BHR1,BHRossi,BR4} by introducing several
distinct notions of {\em generalised \pe}.

The differences between the periodic and the almost periodic settings are not only technical.
Indeed, the existence of the a.p.~\pe\ may fail for an operator with a.p.~coefficients.
A counter-example can be exhibited for an operator of the form
$\Delta-c(x)$ where $c(x)
= K (\cos (2\pi x) + \cos(2\pi\alpha x))$, $\alpha \notin \mathbb{Q}$ and $K$ is
large enough, which is a {\em quasiperiodic} function, a subclass of a.p.~functions.
This follows from a result of \cite{Bj0}  (see also~\cite{SoretsSpencer}) --~proved using 
large deviation techniques and the avalanche principle of \cite{Goldstein-Schlag}~--
and from Ruelle-Oseledec's theorem.
Quite surprisingly, if $K$ is small enough and $\alpha$ satisfies a certain diophantine condition,
then the above operator admits an a.p.~\pe, as shown by Kozlov \cite{Kozlov}.
In Counter-example \ref{ce:critical} below we present another example 
of a self-adjoint a.p.~operator in dimension $1$ which does not admit an a.p.~\pe.
This counter-example does not make use of probabilistic tools, but 
exploits an elementary construction performed in our previous paper~\cite{R2}, 
that we recall in Section \ref{sec:critical}. The 
counter-example further shows that 
the non-existence of the a.p.~\pe\ can also occur for a {\em critical} operator,
in the sense of Pinchover \cite{Pinch88}. The criticality theory is related, but not equivalent, 
to that of the generalised \pe, see Section~\ref{sec:results} below.  
Criticality implies in particular the uniqueness of positive solutions, up to a scalar~multiple.

It is not hard to see that for a self-adjoint operator in spatial dimension $N=1$,
the a.p.~\pe~is necessarily simple,
if it exists, see e.g.~\cite[Proposition~1.7]{NR3}. 
The same is true in dimension $N=2$ owing to a Liouville-type result by Nordmann~\cite{Nord}.
Combining the latter with a result by Pinchover~\cite{Pinch07}, one can prove a stronger property:
if an operator admits an a.p.~positive solution then it is critical if and only if 
the spatial dimension is $N\leq2$, see Corollary \ref{cor:N23} below.

We conclude this work with another counter-example to the 
existence of the a.p.~\pe, which in addition shows that an operator with a.p.~coefficients can be
{\em subcritical} but can have a critical {\em limit operator}, cf.~Counter-example~\ref{ce:subcritical}~below.

\bigskip\bigskip\bigskip


\section{Precise definitions and statements of the results}\label{sec:results}

We consider elliptic operators $\L$ of the form
\[\L u=\nabla\.(A(x)\nabla u)+b(x)\.\nabla u+c(x)u,\qquad x\in\R^N,\]
where $A$ is a matrix field, $b$ is a vector field and $c$ is a scalar function.
We always assume that the entries of $A,b$ and the function $c$ are real valued, measurable and~bounded, and,
in addition, that $A$ is smooth and it is symmetric and uniformly elliptic.

One cannot define the principal eigenvalue of $-\L$ in a classical way through the Krein-Rutman theory, 
because the domain $\R^N$ is unbounded.
In collaboration with H.~Berestycki and F.~Hamel,
motivated by the study of reaction-diffusion equations in heterogeneous media,
we considered in \cite{BHRossi} the following notion of {\em generalised \pe}:
\[\lambda_1:=\sup\{\lambda\in\R\ :\ \exists\phi>0, -\L\phi\geq\lambda\phi\text{ on }\R^N\}.\]
The functions $\phi$ in the above formula (and in general the functions involved in differential equalities/inequalities)
are understood to belong to $W^{2,N}_{loc}(\R^N)$ (and to satisfy the inequality a.e.).
We point out that no global integrability condition is required on~$\phi$.
The above definition coincides with the one introduced by
Berestycki-Nirenberg-Varadhan \cite{BNV} in the case of linear elliptic operators defined on
bounded non-smooth domains.
%
The authors showed that, up to giving a relaxed meaning to the Dirichlet boundary condition,
such notion enjoys all the fundamental properties of the classical one. 
Some related notions and partial results had been previously given in \cite{NP}.
Instead, in the whole space (or more in general in unbounded domains), 
the quantity $\lambda_1$ is not sufficient to encode all the classical properties
of the \pe.
In fact, another notion, denoted by $\l'$,
has been employed in \cite{BHR1,BHRossi} to give a complete description 
of solutions of the elliptic problem, and a third quantity, $\l''$, has been introduced in \cite{BR4}.
The point is that the eigenvalue admitting a positive eigenfunction is not unique,
as in the case of a bounded domain, but actually the whole half line $(-\infty,\l]$ does, see \cite[Theorem~1.4]{BR4}.
For instance, for a periodic operator $-\L$ which is not self-adjoint, 
the periodic \pe~is typically smaller than $\l$, and actually coincides with $\l'$ and $\l''$, 
see Section \ref{sec:pe} below.

We are concerned in particular with operators with almost periodic coefficients.

\begin{definition}\label{def:ap}
A function $f:\R^N\to\R$ is said to be {\em almost periodic} (a.p.) if from any sequence $(x_n)_{n\in\N}$ in $\R^N$
one can extract a subsequence $(x_{n_k})_{k\in\N}$
such that $(f(x+x_{n_k}))_{k\in\N}$ converges uniformly in $x\in\R^N$.
\end{definition}

We say that the elliptic operator $\L$ is a.p.~if its coefficients, i.e.~the components of the matrix field $A$, 
of the vector field $b$ and the function $c$, are all a.p.~functions.
\\


The {\em criticality theory} has been developed by 
Pinchover \cite{Pinch88} in order to study 
the existence, uniqueness and stability of positive solutions of elliptic equations
beyond the case of operators with compact resolvent, and also by \cite{Murata} in the case of
Schr\"odinger operators. 

\begin{definition}\label{def:criticality}
An elliptic operator $\L$ is 
\begin{itemize}
\item {\em supercritical} if the set of positive solutions of $\L u=0$ is empty;

\item {\em critical} if the set of positive functions satisfying $\L u\leq0$ is one-dimensional;

\item {\em subcritical} in the other cases.
\end{itemize}
\end{definition}

The above definition differs from the original one of Pinchover \cite{Pinch88}, 
which is expressed in terms of the existence a positive Green function or of a ground state, in the sense
of Agmon~\cite{A1}. For a comprehensive treatment of the subject, from both the PDE and the probability point of view,
we refer to the monograph by Pinsky~\cite{PiBook}.
The above equivalent definition is given in \cite{Pinch07}, cf.~also~\cite{A1,PinchLandis} or \cite[Theorems~3.4 and~3.9]{PiBook}.
A previous equivalent definition in the case of Schr\"odinger operators had been given by Simon \cite{Simon}.
Criticality can be related to the generalised \pe:
\[\l<0\ \iff \ \L \text{ is supercritical},\]
\[\l>0\ \implies \ \L \text{ is subcritical},\]
\[\l=0\ \implies \ \L \text{ is either critical or subcritical}.\]
Hence the criticality is not completely characterised by $\l$.

For a critical operator, the set of positive solutions of $\L u=0$ coincides with the 
set of positive supersolutions, hence it is also one-dimensional, by definition.
In dimension $N=1$, the viceversa holds true, namely, an operator is critical if and only if 
the set of positive solutions of $\L u=0$ is one-dimensional,
see \cite[Proposition~5.1.3]{PiBook} (or \cite[Appendix~1]{Murata} for the self-adjoint case).

It is well known that the Laplace operator in $\R^N$ is critical if and only if ${N\leq2}$.
This result has been extended by Damanik-Killip-Simon \cite[Theorem 5]{DKS} to the operator $\L=\Delta+c(x)$.
A related result is given by Pinsky \cite[Theorem 8.2.7]{PiBook}, proved using probabilistic techniques,
that asserts that for a periodic operator $\L$, not necessarily self-adjoint, 
$\L+\lambda_1$ is critical if and only if $N\leq2$. Using the results of \cite{BCN2,Nord,Pinch07}
one can obtain still another extension to non-periodic operators.
 
\begin{proposition}\label{pro:N23}
	Let $\L$ be a self-adjoint operator with regular coefficients. There holds:
	\begin{enumerate}[$(i)$]
		\item If $N\leq 2$ and the equation $\L=0$ admits a positive bounded solution $\varphi$ then
		$\L$ is critical.
		\item If $N\geq 3$ and the equation $\L=0$ admits a solution $\varphi$ satisfying $\inf\varphi>0$ then
				$\L$ is subcritical.
	\end{enumerate}
\end{proposition}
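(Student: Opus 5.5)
Both parts rest on the ground-state (Doob $h$-)transform. Since $\L$ is self-adjoint we have $b\equiv0$, i.e.\ $\L u=\nabla\cdot(A\nabla u)+cu$. Let $\varphi>0$ solve $\L\varphi=0$. For any $u$ we write $u=\varphi v$. The identity
\[\L(\varphi v)=\frac1{\varphi}\nabla\cdot(\varphi^2A\nabla v)\]
reduces the problem to the divergence-form operator $\L_\varphi v:=\nabla\cdot(\varphi^2A\nabla v)$, which has no zero-order term. Positive (super)solutions of $\L$ correspond bijectively to positive (super)solutions of $\L_\varphi$, through multiplication by $\varphi$. Hence $\L$ is critical iff every positive $v$ with $\L_\varphi v\le0$ is constant. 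In the language of Pinchover~\cite{Pinch07}, this is the case iff $\L_\varphi$ is recurrent.

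\textbf{Part $(i)$.} Here $0<\varphi\le\sup\varphi<\infty$. The weight $\varphi^2$ may degenerate at infinity, but it is bounded from above. Let $v>0$ satisfy $\L_\varphi v\le0$. I would show that $v$ is constant by a Liouville argument. The first approach is the capacity/log-cutoff argument in $N\le2$, which is the content of Nordmann~\cite{Nord} and Berestycki--Caffarelli--Nirenberg~\cite{BCN2}.

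Consider $w=v^{-1/2}$, or more simply test the inequality with $\zeta^2/v$. This gives
\[\int\zeta^2\varphi^2A\nabla\log v\cdot\nabla\log v\le 4\int\varphi^2A\nabla\zeta\cdot\nabla\zeta\le C\int|\nabla\zeta|^2 .\]
In the second inequality only the upper bound on $\varphi$ is used. For $N\le2$ we take the logarithmic cutoff
\[\zeta_R=\frac{\log(R^2/|x|)}{\log R}\quad\text{on } R\le|x|\le R^2,\]
with $\zeta_R\equiv1$ inside and $\zeta_R\equiv0$ outside. Then $\int|\nabla\zeta_R|^2\to0$ when $N=2$; when $N=1$ a linear cutoff already works. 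Hence $\nabla\log v\equiv0$ on compact sets, where $\varphi$ and $A$ are bounded below, and so $v$ is constant.

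A refinement handles the fact that we only know $\int_{B_R}\zeta^2|\nabla\log v|^2$-type bounds. One uses Cauchy--Schwarz on the annulus, in the BCN style, or directly invokes \cite[Theorem~1.7]{BCN2} / Nordmann: the condition $\int_{B_R}\varphi^2\le CR^2$ holds here because $\varphi$ is bounded. I expect this step, namely applying the Liouville theorem to supersolutions rather than solutions, to be the main obstacle. The test function $\zeta^2/v$ handles supersolutions directly, so it should go through.

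\textbf{Part $(ii)$.} Now $\varphi\ge\inf\varphi>0$, so $\varphi^2A\ge \delta I$ uniformly. I would show that $\L_\varphi$ is transient, i.e.\ that it admits a nonconstant positive supersolution. The tool is Pinchover's comparison of quadratic forms from~\cite{Pinch07}. The energy of $\L_\varphi$ dominates $\delta\int|\nabla\cdot|^2$, and $-\Delta$ is subcritical in $N\ge3$ because of the Hardy inequality
\[\int|\nabla u|^2\ge\frac{(N-2)^2}{4}\int\frac{u^2}{|x|^2}.\]
Hence
\[\int\varphi^2A\nabla u\cdot\nabla u\ge\delta\frac{(N-2)^2}{4}\int\frac{u^2}{|x|^2}\quad\text{for all } u\in C_c^\infty .\]
A positive Hardy weight exists for $\L_\varphi$, and by the variational characterisation of criticality \cite{Pinch07} (critical iff no such weight) $\L_\varphi$ is subcritical. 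Transforming back, $\L$ is subcritical. The required ground state transformation identity for the quadratic forms,
\[\int\big(A\nabla u\cdot\nabla u-cu^2\big)=\int\varphi^2A\nabla(u/\varphi)\cdot\nabla(u/\varphi),\]
is a routine integration by parts.
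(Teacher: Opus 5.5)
Your proposal is correct, but in both parts it follows a different route from the paper.

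For $(i)$, the paper does not transform the operator. It applies Nordmann's Liouville theorem \cite[Theorem~2]{Nord} to $u=-\psi+k\vp$, which is a subsolution of $\L u=0$, bounded from above, with $\sup u>0$. It concludes that $u$, and hence $\psi$, is a multiple of $\vp$. You instead pass to $\nabla\cdot(\vp^2A\nabla v)$ and test the supersolution inequality with $\zeta^2/v$. After Cauchy--Schwarz this already gives the closed estimate $\int\zeta^2\vp^2A\nabla\log v\cdot\nabla\log v\le 4\int\vp^2A\nabla\zeta\cdot\nabla\zeta$, and the linear (resp.\ logarithmic) cutoff finishes the proof. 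Your route is self-contained and uses only the positivity of $v$, with no growth condition. With the logarithmic cutoff it even works under $\int_{B_R}\vp^2\le CR^2$ instead of boundedness of $\vp$. Your paragraph about a BCN-type ``refinement'' and the ``main obstacle'' is therefore superfluous: the argument is already complete at that point.

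For $(ii)$, the paper argues by contradiction. If $\L$ were critical, Pinchover's comparison theorem \cite[Theorem~1.7]{Pinch07} with $\psi\equiv1$ (usable because $\inf\vp>0$) would make $\nabla\cdot(A\nabla)$ critical, which is false for $N\ge3$. You argue directly: the ground-state representation, the bound $\vp^2A\ge\delta I$ and Hardy's inequality give a nonzero Hardy weight. The ground-state alternative (a Hardy weight exists iff the operator is subcritical, via Allegretto--Piepenbrink) then gives subcriticality. This unpacks the special case of the comparison theorem that is actually needed and makes the mechanism, transience of the Laplacian in $N\ge3$, explicit. The cost is that you rely on the variational characterisation of criticality instead of the comparison theorem and the known non-criticality of $\nabla\cdot(A\nabla)$.

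One small technical point: $|x|^{-2}\notin L^p_{loc}$ for $p>N/2$. To stay within the usual hypotheses of that characterisation, use the smaller bounded weight $\delta\frac{(N-2)^2}{4}\min\{|x|^{-2},1\}$.
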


\begin{proof} $(i)$ \cite[Theorem~2]{Nord} implies that $\varphi$ is a
	{\em generalised principal eigenfunction} of~$\L$, i.e.~$\l=0$.
	Let $\psi>0$ satisfy $\L\psi\leq0$. Call $u:=-\psi+k\vp$ with $k>0$ large enough so that $\sup u>0$.
Thus $u$ is a subsolution of $\L=0$ and it is bounded from above.
It~then follows again from \cite[Theorem~2]{Nord} that $u$ is a multiple of~$\vp$, whence so is $\psi$.
This shows that $\L$ is critical.

$(ii)$ Assume by contradiction that $\L$ is critical. Then applying \cite[Theorem~1.7]{Pinch07} (with $\psi\equiv1$)
one deduces that the operator $\nabla\.(A\nabla)$ is critical too. But this is not the case in dimension $3$ or larger, 
by 
\cite{BCN2,GG,Barlow}.
\end{proof}

An a.p.~function is necessarily bounded. Moreover if it is a positive solution of 
an elliptic equation, then it must have a positive infimum, because
otherwise it would converge locally uniformly (hence uniformly) to 0 around minimising sequences,
which is impossible.
One then derives from Proposition \ref{pro:N23} the following.

\begin{corollary}\label{cor:N23}
	Let $\L$ be a self-adjoint operator with regular coefficients and assume that the equation $\L u=0$ admits a positive a.p.~solution $\varphi$.
	 Then $\L$ is critical if and only if $N\leq 2$.
\end{corollary}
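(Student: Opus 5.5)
The corollary will follow by checking that a positive a.p.~solution $\varphi$ meets the hypotheses of both parts of Proposition~\ref{pro:N23}. The two properties needed are that $\varphi$ is bounded and that $\inf\varphi>0$. Once they are in hand, the case $N\leq2$ is covered by part $(i)$, which gives criticality. The case $N\geq3$ is covered by part $(ii)$, which gives subcriticality, and subcriticality excludes criticality by Definition~\ref{def:criticality}. Together these give the equivalence ``critical $\iff N\leq2$''.

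Boundedness comes straight from Definition~\ref{def:ap}. If $\varphi$ were unbounded, take $x_n$ with $|\varphi(x_n)|\to\infty$. Some subsequence of the translates $\varphi(\cdot+x_{n_k})$ then converges uniformly, so it is uniformly Cauchy. Evaluating at $x=0$ shows that $\varphi(x_{n_k})$ is a Cauchy sequence of reals, which is a contradiction.

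For the positive infimum, suppose $\inf\varphi=0$ and pick a minimising sequence $x_n$ with $\varphi(x_n)\to0$. By almost periodicity of $\varphi$ and of the coefficients of $\L$, along a common subsequence we have $\varphi(\cdot+x_n)\to\varphi^*$ and the coefficients converge to those of a limit operator $\L^*$, all uniformly. Interior $W^{2,p}$ estimates, which use the uniform bounds on the coefficients and on $\varphi$, let us pass to the limit in $\L u=0$. Hence $\varphi^*\geq0$ solves $\L^*\varphi^*=0$ and $\varphi^*(0)=0$. The strong maximum principle, or Harnack's inequality with constants uniform along the translates, then gives $\varphi^*\equiv0$. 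Since the convergence is uniform, this means $\sup\varphi=\lim\sup\varphi(\cdot+x_n)=0$, which contradicts $\varphi>0$.

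The main care point is this last step, which needs uniform convergence rather than mere local convergence. Almost periodicity provides exactly that, whereas Harnack's inequality alone would only give local convergence. After this, the corollary is an immediate application of Proposition~\ref{pro:N23}.
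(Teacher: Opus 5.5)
This is essentially the paper's argument: $\varphi$ is bounded by almost periodicity, and $\inf\varphi>0$ because translates along a minimising sequence would tend to $0$ locally uniformly, hence uniformly by almost periodicity of $\varphi$; parts $(i)$ and $(ii)$ of Proposition~\ref{pro:N23} then give the equivalence. One small fix is needed: the corollary does not assume that the coefficients of $\L$ are a.p., so instead of extracting a limit operator you should apply the Harnack inequality directly to $\varphi$ (its constant depends only on ellipticity and the coefficient bounds, so it does not depend on the centre), which gives $\sup_{B_R(x_n)}\varphi\le C_R\,\varphi(x_n)\to0$, and this is exactly the local uniform convergence the paper uses.
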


Corollary~\ref{cor:N23} extends the above mentioned result \cite[Theorem 8.2.7]{PiBook} in the case of self-adjoint operators,
because for a self-adjoint periodic operator, the generalised \pe\ $\lambda_1$ reduces to the periodic \pe, 
see \cite[Proposition~6.6]{BHRossi}, that is, $\L+\lambda_1$ admits a positive periodic~solution.
One can wonder whether the reciprocal of Corollary \ref{cor:N23} holds true for an a.p.~operator, 
namely, if in low dimension criticality is equivalent to the existence of an a.p.~\pe. 
The following gives a negative answer.

\begin{counter}\label{ce:critical}
 There exists a self-adjoint a.p.~operator $\L$ in dimension ${N=1}$ 
 which is critical but does not admit any positive a.p.~eigenfunction.
\end{counter}

The proof of Counter-example \ref{ce:critical}
is given in Section \ref{sec:critical}. It makes use of a construction carried out in our previous paper \cite{R2}, 
that we will reclaim. The operator of the counter-example possesses a ground state with an almost exponential decay,
which is related to the phenomenon of {\em Anderson localization}, see \cite{BouKen}.

We conclude with a last result that shows, on the one hand, that self-adjoint a.p.~operators with $\l=0$
may not be critical, even in dimension $1$, and, on the other hand, that criticality is not invariant
in the passage to limit operators, which, loosely speaking, are the $\omega$-limits
of the original operator.

\begin{definition}\label{def:limitop}
A {\em limit operator} associated with $\L$ is an operator of the form
\[\L^*u=\nabla\.(A^*(x)\nabla u)+b^*(x)\.\nabla u+c^*(x)u,\]
where $A^*(x), b^*(x), c^*(x)$ are the pointwise limits of $A(x+x_n), b(x+x_n), c(x+x_n)$
respectively, for a given sequence $\seq{x}$ in $\R^N$.
\end{definition}

\begin{counter}\label{ce:subcritical}
 There exists a self-adjoint a.p.~operator $\L$ in dimension ${N=1}$ which is subcritical, it has $\l=0$,
 but it admits a limit operator $\L^*$ which is critical.
\end{counter}

\begin{remark}\label{rk:subcritical}
 For an a.p.~operator $\L$, the convergences of $A(x+x_n)$, $b(x+x_n)$, ${c(x+x_n)}$ in the definition of 
 the limit operator $\L^*$ are actually uniform in $x\in\R^N$ (up to subsequences). It follows that
 $A^*(x-x_n)$, $b^*(x-x_n)$, ${c^*(x-x_n)}$ converge (up to subsequences) to $A(x), b(x), c(x)$,
 hence $\L$ is a limit operator associated with $\L^*$ (which is also a.p.).
 In particular, we can rephrase Counter-example \ref{ce:subcritical} by saying that
 there exists a self-adjoint a.p.~operator in dimension ${N=1}$ which is critical, 
  but it admits a limit operator which is subcritical.
\end{remark}

The operator $\L$ in Counter-example \ref{ce:subcritical} exhibits three striking differences with respect
to periodic operators: \\
\indent 1) $\L$ does not admit an a.p.~(or merely a bounded) positive eigenfunction, owing to Proposition~\ref{pro:N23};\\
\indent 2) for a periodic operator $\t\L$, $\t\L+\l$ is always critical
	in dimension $N\leq2$, by~\cite[Theorem~8.2.6]{PiBook}, while $\L$ is not;\\
\indent 3) a periodic operator shares the same criticality character with all its limit operators
(which are just translations of the former), while $\L$ does not.


\section{Other notions of generalised \pe}\label{sec:pe}

%
%
%
%
%
%
%
%
%

Besides the generalised principal eigenvalue $\l$ 
introduced in \cite{BNV}, two other notions were employed in \cite{BHR1,BHRossi,BR4} to characterise 
the existence, uniqueness and the positivity of solutions of elliptic equations on 
unbounded domains, with application to the study of the Fisher-KPP equation in heterogeneous media. 
In the case where the domain is the whole space, they reduce to:
\[\lambda_1':=\inf\{\lambda\in\R\ :\ \exists\phi>0,\ \sup\phi<+\infty,
 -\L\phi\leq\lambda\phi\text{ on }\R^N\},\]
and 
\[\lambda_1'':=\sup\{\lambda\in\R\ :\ \exists\phi,\ \inf\phi>0,\
-\L\phi\geq\lambda\phi\text{ on }\R^N\}.\]
One of the key properties of these other notions is that the operator $\L$ fulfils the {\em Maximum Principle} 
if $\l''>0$ and only if $\l'\geq0$, c.f.~\cite[Theorem~1.6]{BR4}.

One has that the inequalities $\l''\leq\l'\leq\l$ hold in general, while for a self-adjoint operator
it holds that $\l'=\l$, see \cite[Theorem~1.7]{BR4}.
It is further conjectured in \cite{BR4} that $\l''=\l'$.
Using a probabilistic approach connecting elliptic PDEs and branching processes,
Maillard-Tough~\cite{MT-proba} have recently proved this conjecture in the case of self-adjoint operators
with H\"older continuous coefficients, and disproved it with a counter-example for non-self-adjoint operators.

For a periodic operator, both quantities $\l'$ and $\l''$  coincide with the periodic \pe,
that is, the unique eigenvalue admitting a positive periodic eigenfunction, while $\l$ 
is in general greater than the latter. Berestycki and Nadin showed
that, for an operator with a.p. (and even {\em uniquely
ergodic}), H\"older continuous~coefficients, not necessarily self-adjoint, one has $\l'=\l''$.
This result is derived in \cite{BNadin-multi} using the homogenization techniques of Lions-Souganidis~\cite{LS2}.
Indeed, the study of eigenproblems can be recast in the homogenization framework in
terms of the existence of {\em approximate correctors} for a transformed operator.

\section{Non-existence of an a.p.~\pe\ in the critical case}\label{sec:critical}

In \cite{R2}, an a.p.~function $b$ has been constructed in order to exhibit a counter-example to 
a Liouville-type result for elliptic operators.
We use the same function $b$ to construct the operator $\L$ of Counter-example~\ref{ce:critical}.
Let us recall its definition: first, we~set 
\[\sigma(x):=\begin{cases}
-1 & \text{ if } x\in[-1,0)\\[5pt]
1 & \text{ if } x\in[0,1],
\end{cases}\]
then, by iteration,
\Fi{iter}
\forall n\in\N,\quad
\sigma(x):=\begin{cases}
\displaystyle \sigma(x+2\.3^n)-\frac1{(n+1)^2} & \text{ if } x\in[-3^{n+1},-3^n)\\[12pt]
\displaystyle \sigma(x-2\.3^n)+\frac1{(n+1)^2} & \text{ if } x\in(3^n,3^{n+1}].
     \end{cases}
\Ff
Finally, in order to have a regular function, we define
\[b(x):=\sigma(x)\sin^4(\pi x).\]
The function $b$ is depicted in Figure \ref{fig:b}. It is of class $C^3$, 
odd and {\em limit periodic}, that is, it is the uniform limit
of a sequence of continuous periodic functions. In this case, such a sequence is given by $\seq{b}$ with
$b_n\equiv b$ on $[-3^n,3^n]$ and then extended by periodicity.
Limit periodic functions are a subclass of the a.p.~functions because, as it
is easily seen from Definition \ref{def:ap}, the space of
a.p.~functions is closed with respect to the $L^\infty$ norm (see
e.g.~\cite{a.p.,Fink}).
Moreover, 
one can show the following inequality:
\Fi{<b>}
\forall x\in\R,\qquad
\int_0^x b(t)dt\geq K\frac{|x|}{(\log_3|x|+1)^2}-K,
\Ff
for some $K>0$, see \cite[Proposition 3.3]{R2}.
\begin{figure}[ht]
 \centering
 \includegraphics[width=.9\linewidth]{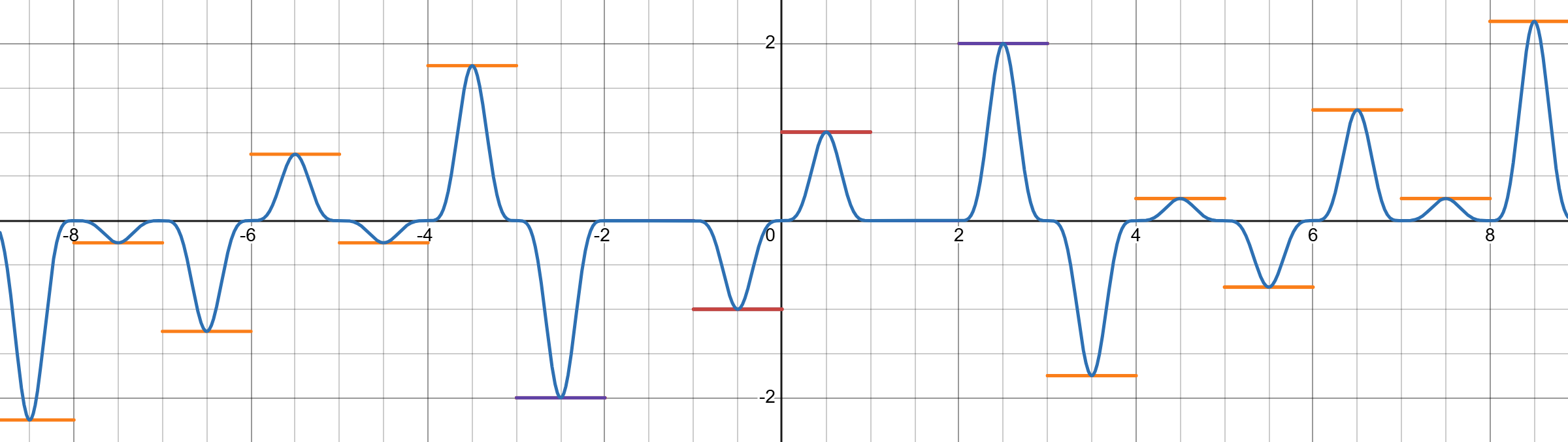}
 \caption{The graphs of $\sigma$ and $b$.}
 \label{fig:b}
 \end{figure}

\begin{proof}[Proof of Counter-example~\ref{ce:critical}]
Consider the following operator:
$$L u:=u''-2b(x)u',\qquad x\in\R,$$
with $b$ defined as above.
The function
$$v(x):=e^{-\int_0^xb(t)dt}u(x)$$
satisfies
$\L v= e^{-\int_0^xb(t)dt}\,Lu$, with
$$\L v:=v''+(b'-b^2)v.$$
The coefficient $b'-b^2$ is a.p., because $b^2$ is a.p., and also $b'$ is, being uniformly continuous,
and it is known that the derivative of an a.p.~function is a.p.~if and only if it is uniformly continuous,
see e.~g.\ \cite[property V]{a.p.}.

We now investigate the criticality of $\L$ and the sign of its \pe\ $\l$.
The function $$\vp(x):=e^{-\int_0^xb(t)dt}$$ 
satisfies $\L\vp=0$.
It follows from \eqref{<b>} that $\vp$ decays to 0 at $\pm\infty$, hence it is bounded.
Recalling the definition of $\l$ and of $\l'$ in Section \ref{sec:pe} one derives $\l\geq0\geq\l'$. But then,
since $\l=\l'$ by \cite[Theorem 1.7(i)]{BR4}, one eventually
deduces that $\l=0$.
On the other hand, in dimension 1, the existence of a bounded positive eigenfunction 
implies  that $\L$ is critical, as it follows from \cite[Theorem~A.9]{Murata} or also
\cite[Theorem~5]{DKS}, \cite{Pinch07}, or even from Proposition \ref{pro:N23} above.
In turn, criticality entails that the space of positive solutions to 
$\L=0$ has dimension 1.
Therefore, $\L$ does not admit an a.p.~positive eigenfunction, since the 
only candidate,~$\vp$, decays to 0 and thus it cannot be a.p. 

Notice that, instead, $L$ is critical and the unique positive eigenfunction 
is the constant one, which is a.p.
\end{proof}


\section{The Subcritical case}\label{sec:subcritical}

This section is devoted to the construction of Counter-example \ref{ce:subcritical}.
We start again from the function $b$ of \cite{R2}, reclaimed in the previous section.
We define
$$L u:=u''+2b(x)u',\qquad x\in\R.$$
Similarly to before, we transform $L$ into a 
self-adjoint operator by considering
$$v(x)=e^{\int_0^xb(t)dt}u(x),$$
namely $\L v= e^{\int_0^xb(t)dt}\,Lu$ with 
\[\L v:=v''-(b'+b^2)v.\]
As noticed in Section~\ref{sec:critical}, this is an a.p.~operator.
%
We first study the generalised \pe\ and the criticality character of~$\L$.

\begin{proposition}
The operator $\L$ is subcritical and satisfies $\l=0$.
\end{proposition}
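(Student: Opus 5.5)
The plan is to exploit the explicit positive solution $\psi(x):=e^{\int_0^x b(t)dt}$ of $\L\psi=0$, which corresponds to the constant solution $u\equiv1$ of $Lu=0$. A direct computation gives $\psi'=b\psi$ and $\psi''=(b'+b^2)\psi$, so indeed $\L\psi=0$ and $\psi>0$.

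First, $\l\ge0$ follows at once from the definition of $\l$, using $\phi=\psi$ with $\lambda=0$.

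Second, I would prove subcriticality by exhibiting a second positive solution that is not a multiple of $\psi$. Reduction of order suggests
\[
\psi_2(x):=\psi(x)\int_{-\infty}^x\frac{dt}{\psi(t)^2}=e^{\int_0^xb}\int_{-\infty}^x e^{-2\int_0^tb}\,dt .
\]
This is well defined because $b$ is odd, so $\int_0^tb$ is even, and \eqref{<b>} gives $e^{-2\int_0^tb}\le e^{2K}e^{-2K|t|/(\log_3|t|+1)^2}$, which is integrable on $\R$. Hence $\psi_2$ is a positive solution of $\L\psi_2=0$. Moreover $\psi_2/\psi$ is strictly increasing, so $\psi_2$ is not proportional to $\psi$. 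The positive solutions therefore form a set of dimension at least $2$. By Definition~\ref{def:criticality}, $\L$ is then not critical. It is also not supercritical, since positive solutions exist. So $\L$ is subcritical.

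Equivalently, in terms of $u$, this is the function $u=\int_{-\infty}^x e^{-2\int_0^t b}$, a bounded, nonconstant, positive solution of $Lu=0$.

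Third, I need $\l\le0$. Since $\L$ is self-adjoint, $\l=\l'$ by \cite[Theorem~1.7]{BR4}, so it suffices to show $\l'\le0$. For that I need a bounded positive $\phi$ with $-\L\phi\le\lambda\phi$ for every $\lambda>0$, or at $\lambda=0$ directly.

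Now $\psi$ itself is unbounded, since $\psi\to\infty$ at $\pm\infty$ by \eqref{<b>}. The candidate is a symmetric combination such as
\[
\phi:=\psi(x)\int_{-\infty}^x\psi^{-2}\cdot\psi(-x)\ldots
\]
More simply, take
\[
\phi_\pm(x)=\psi(x)\int_{\pm\infty}^{x}\psi^{-2},
\]
with the sign chosen so that the integral is positive. The function $\phi_-=\psi_2$ behaves like $\psi(x)\int_{-\infty}^x\psi^{-2}$. As $x\to-\infty$, the tail integral $\int_{-\infty}^x\psi^{-2}$ is roughly $\psi(x)^{-2}/|b|$ in an averaged sense, so $\phi_-$ should be of order $\psi^{-1}$ there and decay. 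As $x\to+\infty$, however, $\phi_-\approx c\,\psi(x)$, which blows up.

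So neither $\phi_\pm$ is bounded. Instead I would use $\phi:=\min(\phi_+,\phi_-)$. A minimum of two solutions is a supersolution, but what is needed here is a subsolution with $-\L\phi\le0$, so this is the wrong direction.

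The alternative, which is the approach I would actually follow, is a Rayleigh quotient argument. Since $\L$ is self-adjoint, $\l\le \int(\chi')^2+(b'+b^2)\chi^2\big/\int\chi^2$ for any compactly supported $\chi$. Take $\chi=\psi^{-1}\eta_R=e^{-\int_0^xb}\,\eta_R$, where $\eta_R$ is a cutoff equal to $1$ on $[-R,R]$ and supported in $[-2R,2R]$. Integrating by parts, the quadratic form of $\psi^{-1}\eta$ reduces to $\int\psi^{-2}(\eta')^2$ plus boundary-free terms; this is the ground-state substitution with respect to the solution $\psi^{-1}$. Note that $\psi^{-1}$ solves $v''-(b^2-b')v=0$, not the same equation, so a correction term $2\int b'\psi^{-2}\eta^2$ appears. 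To avoid this, I would use instead the standard representation through the solution $\psi$: writing $\chi=\psi w$, the form equals $\int\psi^2(w')^2$. I then need $w$ compactly supported with $\int\psi^2(w')^2\big/\int\psi^2w^2\to0$. Choosing $w=\eta_R$ gives a numerator bounded by $R^{-2}\int_{R\le|x|\le2R}\psi^2$ and a denominator at least $\int_{-R}^R\psi^2$. Because $\psi^2$ grows subexponentially, by \eqref{<b>} and the corresponding upper bound $\int_0^xb\le C|x|/(\log_3|x|+1)^2+C$ from \cite{R2}, a suitable choice of $R$, or of $w$ built from $\log$-scale cutoffs, makes the ratio tend to $0$.

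This last step, controlling the growth of $\psi^2$ so that the Rayleigh quotient vanishes, is the main obstacle; the subexponential rate $|x|/\log^2|x|$ should suffice. It yields $\l\le0$, hence $\l=0$.
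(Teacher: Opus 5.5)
Your arguments for $\l\ge0$ and for subcriticality are correct and match the paper's. Your second solution $\psi\int_{-\infty}^x\psi^{-2}$ equals $e^{\int_0^x b}u$ with $u=\int_{-\infty}^x e^{-2\int_0^t b}$, which is the paper's $u+\|u\|_\infty$ up to an additive constant in $u$.

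\textbf{The gap.} It lies in $\l\le0$, which is the substantive part of the statement. You leave it as ``the main obstacle'', and the mechanism you indicate does not work. With $w=\eta_R$ you bound the quotient by $R^{-2}\int_{R\le|x|\le2R}\psi^2\big/\int_{-R}^{R}\psi^2$. Neither subexponential growth of $\psi$ nor two-sided bounds $\int_0^x b\asymp|x|/(\log_3|x|+1)^2$ make this small. Both are compatible with $\psi^2\approx e^{c|x|/\log^2|x|}$, for which your ratio is of order $R^{-2}e^{cR/\log^2R}\to+\infty$. Logarithmic cutoffs are the tool for weights of polynomial growth and do not help against super-polynomial growth. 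The upper bound you attribute to the earlier paper is not stated here, and it is not needed.

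\textbf{The missing idea.} Let the test function decay exponentially at a small rate, and use only that $\int_0^x b=o(|x|)$ as $|x|\to\infty$. This holds because $b$ is odd and a.p., so $\mean{b}=0$.

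In your variational setting, take $w=e^{-\varepsilon|x|}$. Then $\psi^2w^2=e^{2\int_0^x b-2\varepsilon|x|}\in L^1(\R)$ and $|w'|=\varepsilon w$, so $\int\psi^2(w')^2=\varepsilon^2\int\psi^2w^2$. Truncating with $\eta_R$ and letting $R\to\infty$ (dominated convergence) gives Rayleigh quotients tending to $\varepsilon^2$. Hence $\l\le\varepsilon^2$ for every $\varepsilon>0$.

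This is the variational counterpart of the paper's proof, which instead works with $\l'$ and uses $\l=\l'$. The paper sets $\phi_\gamma:=e^{\int_0^x(b-\gamma)}$, where $\gamma$ is small in $C^1$ and $\gamma(-\infty)<0<\gamma(+\infty)$. Then $\L\phi_\gamma=(\gamma^2-2b\gamma-\gamma')\phi_\gamma\ge-\lambda\phi_\gamma$. Since $\frac1x\int_0^x(b-\gamma)\to-\gamma(\pm\infty)$, $\phi_\gamma\to0$ at $\pm\infty$, so $\phi_\gamma$ is bounded. Therefore $\l'\le\lambda$ for every $\lambda>0$.

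Your instinct to look for a bounded positive subsolution was the right one. What you missed is that a small correction to the exponent, with opposite signs at $\pm\infty$, turns the unbounded solution $e^{\int_0^x b}$ into a bounded subsolution at the cost of an arbitrarily small $\lambda$.
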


\begin{proof}
Let $L$, $\L$ be the above operators.
The solutions to $Lu=0$ satisfy 
\[u'(x)=Ce^{-2{\int_0^xb(t)dt}},\]
for some $C\in\R$, hence they are bounded by \eqref{<b>}.
Then, letting $u$ be as above with $C\neq0$, the functions 
$$u_1\equiv1,\qquad u_2\equiv u+\|u\|_\infty$$
are two linearly independent positive solutions of $L=0$. This means that
$L$ is subcritical and has $\l(-L)\geq0$
(here and in the sequel we are stressing the operator to
which the \pe\ is referred to).
Considering the functions $e^{\int_0^xb(t)dt}u_{1,2}(x)$ one infers that also $\L$ is subcritical with $\l(-\L)\geq0$.

It remains to show that $\l(-\L)=0$, i.e.~that $\l(-\L)\leq 0$.
We will make use of the \pe\ $\l'(-\L)$ defined in Section \ref{sec:pe}.
We indeed know from \cite[Theorem 1.7(i)]{BR4} that $\l(-\L)=\l'(-\L)$.
Notice that the solutions to $\L=0$ grow like $e^{\int_0^x b}$, which is 
unbounded, and thus they cannot be used in the definition of $\l'(-\L)$. 
However, we will show that, for any $\lambda>0$, $(\L +\lambda )v=0$ admits 
a positive bounded subsolution. Fix $\lambda>0$. Define
the following function: 
$$\psi(x):=e^{\int_0^x[b(t)-\gamma(t)]dt},$$ 
where $\gamma\in C^1(\R)$ will be chosen later.
We compute
$$\L\psi=[(b-\gamma)'+(b-\gamma)^2]\psi-(b'+b^2)\psi
=(\gamma^2-2b\gamma-\gamma')\psi.$$
We choose $\gamma$ small enough (in $C^1$ norm) so that $\gamma^2-2b\gamma-\gamma'\geq-\lambda$ in $\R$,
and in addition $\gamma(-\infty)<0<\gamma(+\infty)$. It follows that 
$-\L\psi\leq\lambda\psi$.
In order to see that $\psi$ is bounded, we use the fact that $b$ has zero average.
We recall that for an a.p.~function~$f$, the average
$$\mean{f}:=\lim_{h\to\pm\infty}\frac1h\int_{x_0}^{x_0+h}f(t)dt$$
exists uniformly with respect to $x_0$ (and it is independent of $x_0$).
Then, being $b$ odd and a.p., it has zero average, since
$\int_{-h}^{h} b=0$ for all $h\in\R$.
As a consequence,
$$\lim_{x\to\pm\infty}\frac1x\int_0^x(b-\gamma)=-\gamma(\pm\infty),$$
which immediately implies that $\psi\to0$ as $x\to\pm\infty$.
We eventually infer, from the definition, that $\l'(-\L)\leq\lambda$, and 
therefore $\l'(-\L)\leq0$ by the arbitrariness of $\lambda>0$.
We have shown that $\l(-\L)=\l'(-\L)\leq0$, as desired.
\end{proof}



It remains to show that the subcritical operator $\L$ 
admits a limit operator which is critical. 
%
Owing to Proposition~\ref{pro:N23}, it is sufficient to exhibit a limit operator which admits
a positive bounded eigenfunction with eigenvalue $0$ (see also 
\cite[Proposition~1.7]{NR3} and the characterization of critical operators in \cite{Murata}).
We now improve these results by relaxing the boundedness of the eigenfunction,
using the Harnack inequality. 
\begin{lemma}\label{lem:critical}
Let $\L$ be a self-adjoint operator on $\R$ and assume that there exist $\lambda\in\R$ and a positive
function $\vp$ satisfying $-\L\vp=\lambda\vp$ and
$$\liminf_{x\to-\infty}\vp(x)<+\infty\qquad\text{and}\qquad
\liminf_{x\to+\infty}\vp(x)<+\infty.$$
Then necessarily $\lambda=\l$ 
and moreover $\L+\lambda$ is critical.
\end{lemma}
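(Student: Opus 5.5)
Set $\t\L:=\L+\lambda$, so that $\t\L\vp=0$ with $\vp>0$; in dimension one a self-adjoint operator can be taken of the form $(a u')'+cu$. The plan is to run the argument of Proposition~\ref{pro:N23}$(i)$ directly in dimension $1$, replacing global boundedness of $\vp$ by boundedness along two sequences $x_n^\pm\to\pm\infty$ with $\vp(x_n^\pm)\le M$, which the $\liminf$ assumptions provide. The Harnack inequality, applied to $\t\L\vp=0$ on unit intervals around $x_n^\pm$, then bounds $\vp$ by some $M'$ on $[x_n^\pm-1,x_n^\pm+1]$, and by interior elliptic estimates also $|\vp'|\le M''$ there.

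\textbf{Step 1: criticality.} Take $\psi>0$ with $\t\L\psi\le0$; we must show $\psi$ is a multiple of $\vp$. Set $w:=\psi/\vp$. The ground-state transform gives
\[
(a\vp^2 w')'=\vp\,\t\L\psi\le0 ,
\]
so $g:=a\vp^2w'$ is nonincreasing. Suppose $g\not\equiv0$; say $g(x_0)<0$ (the case $g(x_0)>0$ is symmetric, using $-\infty$). Then $g\le g(x_0)<0$ on $[x_0,\infty)$, hence $w'\le g(x_0)/(a\vp^2)\le -\delta<0$ on the intervals $[x_n^+-1,x_n^+]$, with $\delta>0$ uniform in $n$ thanks to the bound $\vp\le M'$ there. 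Since $w'<0$ on all of $[x_0,\infty)$ and we have infinitely many disjoint such intervals, each contributing a drop of at least $\delta$, we get $w\to-\infty$, contradicting $w>0$. Therefore $g\equiv0$, i.e.\ $w$ is constant. The same computation with $\t\L\psi\le 0$ also forces $\t\L\psi=0$, so positive supersolutions are exactly the multiples of $\vp$ and $\t\L$ is critical in the sense of Definition~\ref{def:criticality}. The uniform lower bound $w'\le-\delta$ on those intervals is where the Harnack inequality is essential: only $\vp(x_n^+)\le M$ is assumed, so without it we could not control $\vp$ on a whole interval.

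\textbf{Step 2: $\lambda=\l$.} Since $\vp>0$ satisfies $-\L\vp=\lambda\vp$, the definition of $\l$ gives $\l\ge\lambda$. Suppose $\l>\lambda$ and pick $\mu\in(\lambda,\l)$. By the definition of $\l$ there exists $\phi>0$ with $-\L\phi\ge\mu\phi$, so
\[
\t\L\phi=(\L+\lambda)\phi\le(\lambda-\mu)\phi<0 .
\]
Thus $\phi$ is a positive supersolution of $\t\L$, and by Step~1 it must be a multiple of $\vp$, i.e.\ a solution. This contradicts $\t\L\phi<0$, so $\lambda=\l$. (Alternatively, $\lambda<\l$ would make $\t\L$ subcritical by the relations between $\l$ and criticality recalled in Section~\ref{sec:results}, applied to $\t\L$, contradicting Step~1.)

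The main obstacle is Step~1, specifically making the derivative bound uniform on the intervals near $x_n^\pm$ despite having no global bound on $\vp$.
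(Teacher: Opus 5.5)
Your proof is correct, and its core mechanism is the paper's: $a\vp^2(\psi/\vp)'=a(\vp\psi'-\psi\vp')$ is the Wronskian. The Harnack inequality on unit intervals around the points where $\vp$ stays bounded gives a uniform bound on $|(\psi/\vp)'|$ there, which drives the ratio to $-\infty$.

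The difference is what you apply this to. The paper takes $\psi$ to be another \emph{solution} of $(\L+\lambda)\psi=0$, so the Wronskian is constant. That only yields uniqueness, up to multiples, of positive solutions. The paper then needs two outside inputs: Pinsky's one-dimensional characterisation (Proposition~5.1.3 of his monograph) to upgrade this to criticality, and the relations between $\l$ and criticality recalled in Section~\ref{sec:results} to conclude $\lambda=\l$.

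You instead take $\psi$ to be an arbitrary positive \emph{supersolution}, for which the Wronskian is only nonincreasing. That monotonicity is enough to run the same argument on the appropriate half-line, chosen by the sign of $g(x_0)$; this plays the role of the paper's reflection and uses both $\liminf$ hypotheses in the same way. You therefore obtain criticality directly in the sense of Definition~\ref{def:criticality}. Your Step~2 then gets $\lambda=\l$ straight from the definition of $\l$, with no further input.

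Your route is self-contained at the cost of the ground-state identity $(a\vp^2w')'=\vp\,\t\L\psi$ (valid a.e.\ for $\psi\in W^{2,1}_{loc}$). The paper's is shorter because it outsources the passage from solutions to supersolutions and the identification of $\l$.
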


\begin{proof}
Let $\psi$ be another eigenfunction of $-\L$, not identically equal to $\vp$, associated with the same 
eigenvalue $\lambda$ as~$\vp$. We can assume without loss
of generality that $\psi(0)\neq0$. Let us normalize $\vp,\psi$ 
by $\vp(0)=\psi(0)=1$.
Using the equation $(\L+\lambda)=0$ satisfied by both $\vp$ and $\psi$ we get, for all 
$x\in\R$,
$$\int_0^x (A\vp')'\psi=\int_0^x(A\psi')'\vp,$$
from which we deduce
$$A(x)(\vp'\psi-\psi'\vp)(x)=A(0)(\vp'-\psi')(0),$$
or equivalently
\Fi{psi/vp}
(\psi/\vp)'(x)=\frac{A(0)(\psi'-\vp')(0)}{A(x)\vp^2(x)}.
\Ff
Since $\psi\not\equiv\vp$, we know that $(\psi'-\vp')(0)\neq0$.
Up to reflecting the functions $\vp$, $\psi$ and the coefficients of $\L$, it is not restrictive to 
assume that $(\psi'-\vp')(0)>0$ (notice that the reflection does not affect 
the hypothesis on $\vp$).
We then infer from \eqref{psi/vp} that $\psi/\vp$ is increasing on $\R$.   
Next, by hypothesis, there exists a sequence $\seq{x}$ in $\R$ 
such that 
$$x_n\to-\infty,\qquad\limn\vp(x_n)\in[0,+\infty).
$$
We can assume that $\seq{x}$ satisfies (up to subsequences) $x_{n+1}\leq x_n-1$ 
for $n\in\N$.
We now use the Harnack inequality. It implies the existence of a 
positive constant $C$ such that 
$$\forall n\in\N,\ x\in[x_n-1,x_n+1],\quad
\vp(x)\leq C.$$
This fact, together with \eqref{psi/vp}, yields
$$\forall n\in\N,\ x\in[x_n-1,x_n+1],\quad
(\psi/\vp)'(x)\geq\frac{A(0)(\psi'-\vp')(0)}{C^2\sup A},$$
from which we deduce
\[\begin{split}
(\psi/\vp)(x_{n+1}) &\leq(\psi/\vp)(x_n-1)\leq
(\psi/\vp)(x_n)-\frac{A(0)(\psi'-\vp')(0)}{C^2\sup A}\\
&\leq\cdots\leq (\psi/\vp)(x_1)-n\frac{A(0)(\psi'-\vp')(0)}{C^2\sup A}.
\end{split}\]
We eventually obtain that $(\psi/\vp)(x_n)\to-\infty$ as $n\to\infty$.
This shows that $\vp$ is the unique positive
eigenfunction of $-\L$ with eigenvalue $\lambda$, up to a scalar multiple, 
which, according to \cite[Proposition~5.1.3]{PiBook}, implies that 
$\L+\lambda$ is critical, whence in particular $\lambda=\l$.
\end{proof}

In order to conclude the proof of Counter-example \ref{ce:subcritical},
it remains to find a limit operator associated with $\L$ that fulfils the hypothesis of Lemma \ref{lem:critical}.
We claim that the function $b$ (defined in Section \ref{sec:critical}) satisfies the following:
\Fi{kn>}
\forall k,n\in\N,\ k\leq n,\qquad\int_{3^n-3^k}^{3^n}b(x)dx\geq0.
\Ff
Let us first show that \eqref{kn>} holds in the case $n=k$, i.e.~that $\int_{0}^{3^n}b(x)dx\geq0$.
This inequality holds fo $n=0$. Assuming that it holds for some $n\in\N$, using \eqref{iter} we get
\[
\begin{split}
\int_{0}^{3^{n+1}}b(x)dx &
=\int_{0}^{3^{n}}b(x)dx+\int_{3^n}^{3^{n+1}}[\sigma(x-2\.3^n)+(n+1)^{-2}]\sin^4(\pi x)dx\\
&\geq\int_{-3^n}^{3^n}b(x)dx,
\end{split}
\]
and the latter integral is 0 because $b$ is odd.
Hence \eqref{kn>} holds for $n=k$. We now fix $k\in\R$
and proceed by iteration on $n$.
Using 
\eqref{iter} we derive
$$\int_{3^{n+1}-3^k}^{3^{n+1}}b(x)dx=\int_{3^{n+1}-3^k}^{3^{n+1}}[\sigma(x-2\.3^n)+(n+1)^{-2}]\sin^4(\pi x)dx\geq
\int_{3^n-3^k}^{3^n}b(x)dx,$$
and thus, if \eqref{kn>} holds for some $n$, then it holds true for $n+1$.
This proves \eqref{kn>}.

Next, using again \eqref{iter}, we deduce, for $k\leq n$,
\[\begin{split}
\int_{3^n}^{3^n+3^k}b(x)dx &=\int_{3^n}^{3^n+3^k}[\sigma(x-2\.3^n)+(n+1)^{-2}]\sin^4(\pi x)dx\\
&=\int_{-3^n}^{-3^n+3^k}b(x)dx+
\frac{C 3^k}{(n+1)^2},
\end{split}\]
where $C=\int_0^1\sin^4(\pi x)dx$.
Then, since $b$ is odd, we obtain
$$\int_{3^n}^{3^n+3^k}b(x)dx=\int_{3^n}^{3^n-3^k}b(y)dy+
\frac{C3^k}{(n+1)^2},$$
and therefore, by \eqref{kn>}, we eventually derive
\Fi{kn<}
\forall k,n\in\N,\ k\leq n,\quad\int_{3^n}^{3^n+3^k}b(x)dx
\leq \frac{C3^k}{(n+1)^2}.
\Ff

Let us consider the sequence $(b_n)_{n\in\N}$ of translations of $b$ defined by
$$b_n(x):=b(x+3^n).$$
By almost periodicity, $(b_n)_{n\in\N}$ and $(b'_n)_{n\in\N}$ converge uniformly
(up to subsequences) to some function 
$b_\infty$ and its derivative $b_\infty'$, respectively.
It follows from \eqref{kn>} that
$$\forall 
k\in\N,\quad\int_{-3^k}^0b_\infty(x)dx=\limn\int_{-3^k}^0b_n(x)dx\geq0,$$
while, from \eqref{kn<},
$$\forall 
k\in\N,\quad\int_{0}^{3^k}b_\infty(x)dx=\limn\int_{0}^{3^k}b_n(x)dx\leq
\limn\frac{C3^k}{(n+1)^2}=0.$$
We finally consider the limit operator
$$\L^*u:=u''-(b_\infty'+b_\infty^2) u.$$
The function
$$\vp(x):=e^{\int_0^xb_\infty(t)dt}$$
satisfies $\L^*\vp=0$ and, in addition,
$$\forall k\in\N,\quad
\vp(\pm 3^k)=e^{\int_0^{\pm3^k}b_\infty(t)dt}\leq1.$$
We can therefore apply Lemma \ref{lem:critical} and infer that $\L^*$ is 
critical.


\end{document}